\newtheorem{theorem}{Theorem}
\newtheorem{conj}{Conjecture}
\newtheorem{lemma}{Lemma}
\theoremstyle{definition}
\theoremstyle{remark}
\numberwithin{equation}{section}
\begin{document}

\title[Proof of a Conjecture on 6-colored Generalized Frobenius Partitions]
 {Proof of a Conjecture on 6-colored \\Generalized Frobenius Partitions}

\author{LIUQUAN WANG}
\address{Department of Mathematics, National University of Singapore, Singapore, 119076, SINGAPORE}

\email{wangliuquan@u.nus.edu; mathlqwang@163.com}

\subjclass[2010]{Primary 05A17; Secondary 11P83}

\keywords{Congruences; Generalized Frobenius patitions; 6 colors; 3-dissections}

\date{July 6, 2015}
\dedicatory{}

\maketitle

\begin{abstract}
Let $c\phi_{k}(n)$ be the $k$-colored generalized Frobenius partition function. By employing the generating function of $c\phi_{6}(3n+1)$ found by Hirschhorn, we prove that $c\phi_{6}(27n+16)\equiv 0$ (mod 243). This confirms a conjecture of E.X.W. Xia. We also find a congruence relation $c\phi_{6}(81n+61) \equiv 3 c\phi_{6}(9n+7)$ (mod 243). Moreover, we show that $c\phi_{6}(81n+61) \equiv 0$ (mod 81), $c\phi_{6}(243n+142) \equiv 0$ (mod 243) and $c\phi_{6}(729n+ 547) \equiv 0$ (mod 243). We further conjecture that for $n\ge 0$, $c\phi_{6}(243n+142) \equiv 0$ (mod 729).
\end{abstract}

\section{introduction}
In his 1984 AMS Memoir, Andrews \cite{Andrews} introduced the concept of generalized Frobenius partitions. For any positive integer $k$, let $c\phi_{k}(n)$ denote the number of $k$-colored generalized Frobenius partition function of $n$. The generating function of $c\phi_{k}(n)$ is given by
\[\sum\limits_{n=0}^{\infty}{c\phi_{k}(n)q^n}=\frac{1}{(q;q)_{\infty}^k}\sum\limits_{m_1, \cdots, m_{k-1}=-\infty}^{\infty}{q^{Q(m_1,\cdots, m_{k-1})}},\]
where
\[Q(m_1,\cdots, m_{k-1})=\sum\limits_{i=1}^{k-1}{m_{i}^2}+\sum\limits_{1\le i <j \le k-1}{m_im_j}.\]

In particular, if $k=6$, Baruah and Sarmah \cite{Baruah} proved that
\begin{equation}\label{gen}
\begin{split}
\sum\limits_{n=0}^{\infty}{c\phi_{6}(n)q^n}&=\frac{1}{(q;q)_{\infty}^{6}}\Big(\varphi^3(q)\varphi(q^2)\varphi(q^6)+24q\psi^3(q)\psi(q^2)\psi(q^3)\\
&\quad \quad +4q^2\varphi^3(q)\psi(q^4)\psi(q^{12})\Big),
\end{split}
\end{equation}
where as usual, $\varphi(q)$ and $\psi(q)$ are Ramanujan's theta functions, namely (see \cite{Berndt}, for example)
\[\varphi(q)=\sum\limits_{n=0}^{\infty}{q^{n^2}}, \quad \psi(q)=\sum\limits_{n=0}^{\infty}{q^{n(n+1)/2}}.\]
They also established the 2- and 3- dissections of (\ref{gen}), from which they proved some interesting Ramanujan-type congruences: for $n\ge 0$,
\[c\phi_{6}(2n+1) \equiv 0 \pmod{4},\]
\begin{equation}\label{mod9a}
c\phi_{6}(3n+1) \equiv 0 \pmod{9},
\end{equation}
and
\begin{equation}\label{mod9b}
c\phi_{6}(3n+2) \equiv 0 \pmod{9}.
\end{equation}
They also prosed the following conjecture: for $n \ge 0$,
\begin{equation}\label{mod9}
c\phi_{6}(3n+2) \equiv 0 \pmod{27}.
\end{equation}

By utilizing the generating function for $c\phi_{6}(3n+2)$ given by Baruah and Sarmah \cite{Baruah} and the $(p,k)$-parametrization of theta functions due to Alaca and Williams, Xia \cite{Xia} proved (\ref{mod9}) and he further conjectured that
\begin{equation}\label{mod27}
c\phi_{6}(9n+7) \equiv 0 \pmod{27},
\end{equation}
and
\begin{equation}\label{mod243}
c\phi_{6}(27n+16)\equiv 0 \pmod{243}.
\end{equation}
By using some known $q$ series identities, Hirschhorn \cite{Hirschhorn} gave a new presentation of 3-dissections of (\ref{gen}), from which both (\ref{mod9a}) and (\ref{mod9b}) follows readily. He also proved (\ref{mod27}) but left (\ref{mod243}) still open.

In this note, by employing the generating function for $c\phi_{6}(3n+1)$ found by Hirschhorn \cite{Hirschhorn}, we obtain the following result.
\begin{theorem}\label{thm}
For any integer $n\ge 0$, we have
\begin{equation}\label{mod243a}
c\phi_{6}(27n+16) \equiv 0 \pmod{243},
\end{equation}
\begin{equation}\label{mod81}
c\phi_{6}(81n+61) \equiv 0 \pmod{81},
\end{equation}
\begin{equation}\label{mod243b}
c\phi_{6}(243n+142) \equiv 0 \pmod{243},
\end{equation}
\begin{equation}\label{mod243c}
c\phi_{6}(729n+ 547) \equiv 0 \pmod{243}
\end{equation}
and the congruence relation
\[c\phi_{6}(81n+61) \equiv 3 c\phi_{6}(9n+7) \pmod{243}.\]
\end{theorem}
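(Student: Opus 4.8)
The plan is to work entirely with the generating function for $c\phi_{6}(3n+1)$ supplied by Hirschhorn, since all of the target congruences live in residue classes that refine $3n+1$ (indeed $27n+16$, $81n+61$, $243n+142$, $729n+547$ all satisfy $\equiv 1 \pmod 3$, and $9n+7\equiv 1\pmod 3$ as well). I would first extract from Hirschhorn's formula a clean $q$-expansion for $\sum c\phi_6(3n+1)q^n$ in terms of eta-quotients or the theta functions $\varphi,\psi$, and then iterate the operation of picking out an arithmetic subprogression modulo $3$. Concretely, I would perform a $3$-dissection to isolate $\sum c\phi_6(9n+7)q^n$, then dissect again to reach $\sum c\phi_6(27n+16)q^n$, and continue for the deeper progressions $81n+61$, $243n+142$, $729n+547$. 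Each dissection step rests on known $3$-dissection identities for powers of $(q;q)_\infty$ and for $\varphi,\psi$, so the scaffolding is standard even though the bookkeeping grows.

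The mod $243$ congruences would then be proved by tracking powers of $3$ through these dissections. The key mechanism is that each time one extracts a subprogression selected by a cube-like condition, the relevant lattice-sum or theta contribution carries an extra factor divisible by $3$; after $\log_3$-many steps one accumulates enough factors of $3$ to reach $3^5=243$. I would express each intermediate generating function as an integer-coefficient combination of eta-quotients, reduce the coefficients modulo $243$ (or modulo $3$ at suitable intermediate stages, as in the proof of \eqref{mod9a}), and check that the surviving terms are all divisible by the claimed power of $3$. For \eqref{mod81} the argument is the same but one power of $3$ shorter, which is why the modulus drops to $81$.

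For the final congruence relation $c\phi_{6}(81n+61)\equiv 3\,c\phi_{6}(9n+7)\pmod{243}$, the natural strategy is not to prove the two sides divisible separately but to compare their generating functions directly. I would write down $\sum c\phi_6(81n+61)q^n$ and $\sum c\phi_6(9n+7)q^n$ as eta-quotient combinations obtained from the dissection tower above, multiply the latter by $3$, and show the difference has all coefficients $\equiv 0 \pmod{243}$. Because the congruence \eqref{mod27} already gives $c\phi_6(9n+7)\equiv 0\pmod{27}$, the factor $3$ makes the right side divisible by $81$, consistent with \eqref{mod81} on the left; the content of the relation is the sharper matching modulo $243$, so I would verify the two series agree term-by-term after reduction modulo $243$ by matching them against a common eta-quotient basis.

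The main obstacle I anticipate is controlling the explicit $3$-dissection identities for the mixed products of $\varphi$ and $\psi$ at various arguments ($\varphi(q),\varphi(q^2),\varphi(q^6),\psi(q),\psi(q^2),\psi(q^3),\psi(q^4),\psi(q^{12})$) that appear in \eqref{gen} and, after Hirschhorn's reduction, in the $c\phi_6(3n+1)$ generating function. Iterating the dissection three or four times produces increasingly intricate eta-quotient expressions, and the real difficulty is organizing these so that the power-of-$3$ divisibility is transparent rather than buried in cancellation. I expect the cleanest route is to fix, at each stage, a small basis of eta-quotients modulo the relevant power of $3$ and to reduce everything to that basis, so that the congruences and the final relation become finite coefficient checks.
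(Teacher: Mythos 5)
Your scaffolding---iterating $3$-dissections of Hirschhorn's generating function for $c\phi_{6}(3n+1)$, and proving the relation $c\phi_{6}(81n+61)\equiv 3\,c\phi_{6}(9n+7)\pmod{243}$ by comparing the two dissected series against a common product---is indeed the skeleton of the paper's proof. But your stated divisibility mechanism is wrong, and it fails exactly at the two deepest congruences. You claim that each extraction of a subprogression ``carries an extra factor divisible by $3$,'' so that enough steps accumulate $3^{5}$. That cannot prove \eqref{mod243a}: the progression $27n+16$ sits only \emph{one} dissection step below $9n+7$, and the generating function of $c\phi_{6}(9n+7)$ reduces modulo $243$ to $36J(q)\equiv -216\,\varphi(-q^2)a(q^3)^6\psi(-q^3)(q^3;q^3)_{\infty}^3$, whose numerical content is $-216=-8\cdot 27$; so generically only $27$ is forced, and passing to $27n+16$ must gain \emph{two} more powers of $3$ in a single step. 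No ``one factor of $3$ per extraction'' bookkeeping can deliver that. What actually happens is structural vanishing: in the product above, $\varphi(-q^2)=\varphi(-q^{18})-2q^2X(-q^6)$ contributes only exponents $\equiv 0,2\pmod 3$, while every other factor is a series in $q^3$; hence the $q^{3n+1}$ coefficients are identically zero modulo $243$, not merely divisible by one extra $3$. The same vanishing, applied to \eqref{mod81result}, is what gives \eqref{mod243b}. Without identifying this phenomenon, your plan stalls at \eqref{mod243a} and \eqref{mod243b}.

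There are two further gaps. First, the collapse to a single tractable product is not a consequence of generic ``$3$-dissection identities for powers of $(q;q)_{\infty}$'': it requires the cubic theta function $a(q)$, its dissection $a(q)=a(q^3)+6q(q^9;q^9)_{\infty}^3/(q^3;q^3)_{\infty}$, the resulting congruences for $a(q)^5$ and $a(q)^6$ modulo $27$, and Jacobi's identity in the form $(q;q)_{\infty}^3=(q^3;q^3)_{\infty}a(q^3)-3q(q^9;q^9)_{\infty}^3$; these supply the cross terms (with coefficients $-3$, $-6$, $9$) that produce the factors of $3$ which \emph{do} appear, and make two of the three terms in \eqref{start} cancel against each other modulo $27$. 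Second, your fallback of verifying congruences by ``matching against a common eta-quotient basis'' as a finite coefficient check is not rigorous as stated: a congruence of $q$-series modulo $3^5$ is infinitely many conditions, and truncating it requires a Sturm-type bound that you never invoke; the paper needs no such bound because every step is an exact product identity. Finally, note that \eqref{mod243c} is not obtained by dissecting down five levels to $729n+547$ (which your framework gives no means to control modulo $243$): it falls out immediately by substituting $n\mapsto 9n+6$ into the congruence relation and applying \eqref{mod81}---a shortcut your outline misses.
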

Since
\[c\phi_{6}(16)=2 \times 3^5 \times 1222049, \quad c\phi_{6}(61)=2^2 \times 3^4 \times 19 \times 701612098458871\]
and
\begin{displaymath}
\begin{split}
c\phi_{6}(547)&=2^{5}\times 3^{5} \times 409\times 6661 \times 3949235117518927056389 \\
&\quad \times 20029030597437898896898971631,
\end{split}
\end{displaymath}
we see that (\ref{mod243a}), (\ref{mod81}) and (\ref{mod243c}) are best possible in the sense that the modulus cannot be replaced by higher powers of 3. However, based on some numerical evidences, we believe that (\ref{mod243b}) can be improved. We propose the following conjecture for research in the future.
\begin{conj}
For any integer $n \ge 0$, we have
\[c\phi_{6}(243n+142) \equiv 0 \pmod{729}.\]
\end{conj}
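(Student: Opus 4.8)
The conjecture is within reach of the circle of ideas behind Theorem \ref{thm}, provided one congruence is sharpened by a single power of $3$. Substituting $n\mapsto 3n+1$ into the congruence relation
\[c\phi_{6}(81n+61)\equiv 3\,c\phi_{6}(9n+7)\pmod{243}\]
of Theorem \ref{thm} gives
\[c\phi_{6}(243n+142)\equiv 3\,c\phi_{6}(27n+16)\pmod{243},\]
so the theorem already links the two progressions at the modulus $243$. The plan is to upgrade this link to
\begin{equation}
c\phi_{6}(243n+142)\equiv 3\,c\phi_{6}(27n+16)\pmod{729}. \tag{$\star$}
\end{equation}
Granting $(\star)$, the conjecture is immediate: by (\ref{mod243a}) we have $c\phi_{6}(27n+16)=243\,t_{n}$ for integers $t_{n}$, whence $3\,c\phi_{6}(27n+16)=729\,t_{n}\equiv 0\pmod{729}$, and $(\star)$ then forces $c\phi_{6}(243n+142)\equiv 0\pmod{729}$. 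One cannot instead hope to strengthen (\ref{mod243a}) itself to modulus $729$, since $c\phi_{6}(16)=2\times 3^{5}\times 1222049$ is divisible by $3^{5}$ but not by $3^{6}$; the extra factor of $3$ must be supplied by the multiplier in $(\star)$, and $(\star)$ is precisely the assertion that no stray $243$-order term obstructs this.

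To establish $(\star)$ I would retrace the derivation of the mod-$243$ relation of Theorem \ref{thm}, but carry the $3$-adic expansions one order further. Concretely, starting from Hirschhorn's generating function for $c\phi_{6}(3n+1)$, I would iterate the $3$-dissections that produce $\sum_{n}c\phi_{6}(9n+7)q^{n}$, $\sum_{n}c\phi_{6}(27n+16)q^{n}$, $\sum_{n}c\phi_{6}(81n+61)q^{n}$ and $\sum_{n}c\phi_{6}(243n+142)q^{n}$ as explicit combinations of eta-quotients and the theta functions $\varphi$ and $\psi$. The mod-$243$ relation is obtained by discarding all contributions divisible by $3^{5}$; to obtain $(\star)$ one must instead retain these $3^{5}$-order contributions and verify that, after forming $\sum_{n}\big(c\phi_{6}(243n+142)-3\,c\phi_{6}(27n+16)\big)q^{n}$, they cancel modulo $729$. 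Because only the subprogression $n\mapsto 3n+1$ is needed, it suffices to control the relevant series on that single residue class, which should limit the number of surviving terms.

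The main obstacle is exactly this gain of one $3$-adic order. At the mod-$243$ level the expansion of $(q;q)_{\infty}^{-6}$ and the $3$-dissections of $\varphi$ and $\psi$ are needed only up to an error of size $3^{5}$, and the simple shape of the mod-$243$ relation reflects a great deal of cancellation at that order; pushing to $3^{6}$ demands both a more precise $3$-dissection of the underlying eta-quotients and a proof that the newly retained terms conspire to vanish mod $729$. I expect this to be the delicate point, since it is conceivable that $c\phi_{6}(81n+61)\equiv 3\,c\phi_{6}(9n+7)$ fails modulo $729$ for general $n$ and survives only on the class $n\equiv 1\pmod 3$ relevant here, so the argument may have to isolate that class before the cancellation becomes visible. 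Should the direct bookkeeping prove unwieldy, a complementary route is to recognise the two extracted generating functions as modular forms of a common weight and level and to reduce $(\star)$ to a finite coefficient check via a Sturm-type bound, with due care taken because the modulus $729=3^{6}$ is a prime power rather than a prime.
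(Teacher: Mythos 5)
First, a point of orientation: the statement you were asked to prove is not proved in the paper at all. It is posed there as an open conjecture, supported only by numerical evidence, and the paper's data show that the neighbouring congruences (\ref{mod243a}), (\ref{mod81}) and (\ref{mod243c}) are sharp, so there is no paper proof to compare you against; the only question is whether your argument closes the conjecture, and it does not. Your reduction step is correct and cleanly stated: substituting $n\mapsto 3n+1$ into the relation $c\phi_{6}(81n+61)\equiv 3\,c\phi_{6}(9n+7)\pmod{243}$ of Theorem \ref{thm} does give $c\phi_{6}(243n+142)\equiv 3\,c\phi_{6}(27n+16)\pmod{243}$, and granting your $(\star)$ at modulus $729$, the conjecture follows from (\ref{mod243a}) exactly as you say. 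Your observation that (\ref{mod243a}) itself cannot be lifted to $729$, because $c\phi_{6}(16)=2\times 3^{5}\times 1222049$, correctly locates where the extra factor of $3$ must come from.

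The genuine gap is that $(\star)$ is left entirely unproven, and it carries the whole weight of the argument; as you yourself concede, it may even fail off the class $n\equiv 1\pmod 3$. Moreover, the paper's derivation has a structural feature that makes ``carry the expansion one order further'' much more than bookkeeping: the outer factor $9$ in (\ref{3n1}) means the paper only ever needs its inner identities modulo $27$ --- Lemma \ref{basic} in the form $(q;q)_{\infty}^{27}\equiv(q^3;q^3)_{\infty}^{9}\pmod{27}$, the truncations (\ref{aq5}) and (\ref{aq6}), and the vanishing $K(q)\equiv 0\pmod{27}$. For $(\star)$ you would need all of these modulo $81$, where each acquires nontrivial correction terms of order exactly $27$: for instance, writing $(q;q)_{\infty}^{3}=(q^3;q^3)_{\infty}+3g(q)$ one finds $(q;q)_{\infty}^{27}\equiv (q^3;q^3)_{\infty}^{9}+27(q^3;q^3)_{\infty}^{8}g(q)\pmod{81}$, and similarly the binomial expansions behind (\ref{aq5}) and (\ref{aq6}) pick up extra terms, while the contributions the paper discards --- $K(q)$ and the $q^{3n+1}$-part of (\ref{Jdisc}) --- survive at order $27$ and would all have to be shown to cancel against one another after three further dissections. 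No mechanism for this cancellation is exhibited, and the Sturm-bound fallback is likewise only gestured at: one would first have to certify that the two extracted series are coefficient sequences of honest modular forms of explicit weight and level, and then justify a finite-check criterion modulo the prime power $3^{6}$, neither of which is routine here because of the factor $a(q)$ and the repeated coefficient extractions. As it stands you have a correct conditional reduction and a plausible research programme, but not a proof; the conjecture remains open both in the paper and after your attempt.
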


\section{Preliminaries}
In this section, we present some 3-dissection identities, which will play a key role in our proof.

It is easy to see that
\begin{displaymath}
\begin{split}
&\varphi(q)=\frac{(q^2;q^2)_{\infty}^{5}}{(q;q)_{\infty}^2(q^4;q^4)_{\infty}^2}, \quad \varphi(-q)=\frac{(q;q)_{\infty}^2}{(q^2;q^2)_{\infty}},\\
&\psi(q)=\frac{(q^2;q^2)_{\infty}^{2}}{(q;q)_{\infty}}, \quad \psi(-q)=\frac{(q;q)_{\infty}(q^4;q^4)_{\infty}}{(q^2;q^2)_{\infty}}.
\end{split}
\end{displaymath}
Moreover, let
\begin{displaymath}
\begin{split}
&X(q):=\sum\limits_{n=-\infty}^{\infty}{q^{3n^2+2n}}=\frac{(q^2;q^2)_{\infty}^{2}(q^3;q^3)_{\infty}(q^{12};q^{12})_{\infty}}{(q;q)_{\infty}(q^4;q^4)_{\infty}(q^6;q^6)_{\infty}},\\
&Y(q):=\sum\limits_{n=-\infty}^{\infty}{q^{n(3n+1)/2}}=\frac{(q^2;q^2)_{\infty}(q^3;q^3)_{\infty}^2}{(q;q)_{\infty}(q^6;q^6)_{\infty}}.
\end{split}
\end{displaymath}
We have the following 3-dissection identities (see \cite[Corollay, p.49]{notebook}):
\begin{equation}\label{3psi}
\varphi(q)=\varphi(q^9)+2qX(q^3),     \quad \psi(q)=Y(q^3)+q\psi(q^9).
\end{equation}

Let
\[a(q):=\sum\limits_{m,n=-\infty}^{\infty}{q^{m^2+mn+n^2}}=1+6\sum\limits_{n\ge 0}\Big(\frac{q^{3n+1}}{1-q^{3n+1}}-\frac{q^{3n+2}}{1-q^{3n+2}}\Big).\]
As shown in \cite{HGB}, we have
\[a(q)=a(q^3)+6q\frac{(q^9;q^9)_{\infty}^3}{(q^3;q^3)_{\infty}}.\]
This implies
\begin{equation}\label{aq5}
a(q)^5 \equiv a(q^3)^5+3qa(q^3)^4\frac{(q^9;q^9)_{\infty}^3}{(q^3;q^3)_{\infty}}+9q^2a(q^3)^3\frac{(q^9;q^9)_{\infty}^6}{(q^3;q^3)_{\infty}^2} \pmod{27},
\end{equation}
and
\begin{equation}\label{aq6}
a(q)^6 \equiv a(q^3)^6+9qa(q^3)^5\frac{(q^9;q^9)_{\infty}^3}{(q^3;q^3)_{\infty}} \pmod{27}.
\end{equation}
\begin{lemma}\label{basic}
Let $p$ be a prime and $\alpha $ be a positive integer. Then
\[(q;q)_{\infty }^{{{p}^{\alpha }}}\equiv ({{q}^{p}};{{q}^{p}})_{\infty }^{{{p}^{\alpha -1}}}  \pmod  {{{p}^{\alpha }}}.\]
\end{lemma}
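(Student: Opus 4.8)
The plan is to prove the congruence by induction on $\alpha$, the engine being the Frobenius-type identity that for any power series $f\in\mathbb{Z}[[q]]$ one has $f(q)^p\equiv f(q^p)\pmod p$. This identity is what lets a single factor of $p$ appear each time we raise to the $p$-th power, and iterating it is exactly what produces the ascending moduli $p^\alpha$.

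For the base case $\alpha=1$ I would apply this identity directly to $f(q)=(q;q)_\infty$. Writing $(q;q)_\infty=\sum_n a_nq^n$ with $a_n\in\mathbb{Z}$, the expansion of the $p$-th power together with the fact that $\binom{p}{k}\equiv 0\pmod p$ for $0<k<p$ gives $(q;q)_\infty^p\equiv\sum_n a_n^p q^{pn}\pmod p$, and then Fermat's little theorem ($a_n^p\equiv a_n$) collapses this to $\sum_n a_nq^{pn}=(q^p;q^p)_\infty$. This establishes $(q;q)_\infty^p\equiv(q^p;q^p)_\infty\pmod p$, which is the assertion for $\alpha=1$.

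For the inductive step I would assume the statement at level $\alpha-1$, i.e. that $(q;q)_\infty^{p^{\alpha-1}}=(q^p;q^p)_\infty^{p^{\alpha-2}}+p^{\alpha-1}g(q)$ for some $g\in\mathbb{Z}[[q]]$, and then raise both sides to the $p$-th power. Expanding by the binomial theorem yields $(q;q)_\infty^{p^{\alpha}}=\sum_{k=0}^{p}\binom{p}{k}\bigl((q^p;q^p)_\infty^{p^{\alpha-2}}\bigr)^{p-k}\bigl(p^{\alpha-1}g(q)\bigr)^{k}$. The $k=0$ term is precisely the desired $(q^p;q^p)_\infty^{p^{\alpha-1}}$; the $k=1$ term carries the factor $\binom{p}{1}p^{\alpha-1}=p^{\alpha}$ and hence vanishes modulo $p^{\alpha}$; and every term with $k\ge 2$ carries a factor $p^{(\alpha-1)k}$.

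The only point requiring genuine care — and the nearest thing to an obstacle — is the $p$-adic bookkeeping in this last expansion, namely verifying that $p^{(\alpha-1)k}$ is divisible by $p^{\alpha}$ for all $k\ge 2$. But for $\alpha\ge 2$ and $k\ge 2$ we have $(\alpha-1)k\ge 2(\alpha-1)=2\alpha-2\ge\alpha$, so every higher term indeed drops out modulo $p^{\alpha}$, and we are left with $(q;q)_\infty^{p^{\alpha}}\equiv(q^p;q^p)_\infty^{p^{\alpha-1}}\pmod{p^{\alpha}}$, completing the induction. I note that no appeal to the divisibility of the intermediate binomial coefficients is even needed here; the powers of $p$ supplied by $(p^{\alpha-1}g)^k$ alone suffice. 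Altogether this is a short, self-contained argument, and I expect the valuation inequality above to be the only step that merits an explicit check.
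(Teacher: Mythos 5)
Your proof is correct and follows essentially the same route as the paper: induction on $\alpha$, with the inductive step obtained by writing the hypothesis as $(q;q)_\infty^{p^{\alpha-1}}=(q^p;q^p)_\infty^{p^{\alpha-2}}+p^{\alpha-1}g(q)$ and expanding the $p$-th power by the binomial theorem, exactly as the paper does. The only cosmetic difference is in the base case $\alpha=1$: the paper applies $(1-x)^p\equiv 1-x^p\pmod{p}$ to each factor of the infinite product, whereas you apply the Frobenius property $f(q)^p\equiv f(q^p)\pmod{p}$ to the whole series (which additionally invokes Fermat's little theorem) --- both rest on the same fact that $p\mid\binom{p}{k}$ for $0<k<p$.
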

\begin{proof}
Note that for any prime $p$, we have
\[\binom{p}{k} = \frac{p}{k}\cdot \binom{p-1}{k-1} \equiv 0 \pmod{p}, \quad 1 \le k \le p-1.\]
By the binomial theorem, we have
\[{{(1-x)}^{p}}=1-px+\cdots +p{{(-x)}^{p-1}}+{{(-x)}^{p}}\equiv 1-{{x}^{p}} \pmod{p}.\]
Hence we have
\[(q;q)_{\infty}^{p} \equiv (q^{p};q^{p})_{\infty} \pmod{p}.\]
This proves the lemma for $\alpha =1$. Suppose for some $\alpha \ge 2$ we have
\[(q;q)_{\infty}^{p^{\alpha-1}} \equiv (q^{p};q^{p})_{\infty}^{p^{\alpha-2}} \pmod{p^{\alpha-1}},\]
then there exists a series $f(q)$ with integer coefficients such that
\[(q;q)_{\infty}^{p^{\alpha-1}} = (q^{p};q^{p})_{\infty}^{p^{\alpha-2}} +p^{\alpha-1}f(q).\]
Again by the binomial theorem, we deduce that
\[(q;q)_{\infty}^{p^{\alpha}} =\Big((q^{p};q^{p})_{\infty}^{p^{\alpha-2}} +p^{\alpha-1}f(q)\Big)^{p} \equiv (q^{p};q^{p})_{\infty}^{p^{\alpha-1}} \pmod{p^{\alpha}}.\]
By induction on $\alpha$, we complete our proof.
\end{proof}

The following 3-dissection identity is also useful in our arguments.
\begin{lemma}\label{3dis}
We have
\begin{equation}\label{jacobi}
(q;q)_{\infty }^{3}=(q^3;q^3)_{\infty}a(q^3)-3q{{({{q}^{9}};{q}^{9}})_{\infty}^{3}}.
\end{equation}
\end{lemma}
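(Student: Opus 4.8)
The plan is to prove the 3-dissection identity \eqref{jacobi} by expressing both sides in terms of the standard eta-quotient building blocks already introduced in the Preliminaries, and then reducing everything to a single known identity for $(q;q)_\infty^3$. The cleanest route is to recognize that the quantity $a(q^3)$ and the theta-type functions $X$, $Y$ are all infinite products in the variables $q^3, q^6, q^9, \ldots$, so the right-hand side is naturally a sum of two terms each living on the $3$-divisible part of the $q$-expansion, perturbed by the lone factor $q$ in front of the second term. First I would record the product formula for $a(q)$ itself. Using the representation $a(q)=\varphi(q)\varphi(q^3)+4q\psi(q^2)\psi(q^6)$ (or equivalently the Borweins' eta-quotient expression), together with the product formulas for $\varphi(-q)$ and $\psi(q)$ listed at the start of Section~2, one can write $a(q^3)$ purely as a product in powers of $q^3$. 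Substituting this into $(q^3;q^3)_\infty a(q^3)$ then yields the first term on the right of \eqref{jacobi} as an explicit eta-quotient.

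Next I would handle the left-hand side. The identity \eqref{jacobi} is simply the $3$-dissection of the function $(q;q)_\infty^3$, and the most direct way to obtain it is via Jacobi's classical identity
\[(q;q)_\infty^3=\sum_{n=0}^{\infty}(-1)^n(2n+1)q^{n(n+1)/2},\]
splitting the triangular-number exponents $n(n+1)/2$ according to the residue of $n$ modulo $3$. The exponent $n(n+1)/2$ runs through residues that fall into exactly two classes mod $3$ (the classes $0$ and $1$, with the class $2$ never occurring), which is precisely why \eqref{jacobi} has only two terms rather than three. Collecting the $n\equiv 0,2\pmod 3$ contributions gives the $a(q^3)$ term after re-summing into the Lambert-series form defining $a$, while the $n\equiv 1\pmod 3$ contribution, carrying an overall factor of $q$, re-sums to $-3q(q^9;q^9)_\infty^3$ using Jacobi's identity again in the variable $q^9$.

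I expect the main obstacle to be the bookkeeping in the re-summation step: after sorting the triangular numbers by residue mod $3$, one must verify that the coefficient factor $(-1)^n(2n+1)$ combines with the shifted exponents to reproduce exactly the product $(q^3;q^3)_\infty a(q^3)$ and the cube $(q^9;q^9)_\infty^3$, including getting the constants $1$ and $-3$ right. Concretely, writing $n=3m$, $n=3m+1$, $n=3m+2$ and simplifying each of the three triangular-number sums into theta functions is where sign and coefficient errors are most likely to creep in. An alternative that sidesteps the Lambert series entirely is to verify \eqref{jacobi} as an identity among eta-quotients: reduce both sides to a common denominator using the product formulas for $X$, $Y$ and $a(q^3)$, and then appeal to the theory of modular forms on $\Gamma_0(N)$ (or simply match enough initial $q$-coefficients, the number being bounded by the valence formula) to conclude equality. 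Either way, once the dissection of $(q;q)_\infty^3$ is in hand, no further structural insight is needed, so the difficulty is purely computational rather than conceptual.
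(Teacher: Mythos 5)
Your central route is the same as the paper's: apply Jacobi's identity $(q;q)_\infty^3=\sum_{n\ge 0}(-1)^n(2n+1)q^{n(n+1)/2}$, note that $n(n+1)/2$ never lies in the residue class $2$ modulo $3$, and sort the terms by $n\bmod 3$, with $n\equiv 0,2$ producing the $(q^3;q^3)_\infty a(q^3)$ term and $n\equiv 1$ producing $-3q(q^9;q^9)_\infty^3$. The $n\equiv 1$ piece is indeed routine, exactly as you say: writing $n=3m+1$ gives exponents $1+9\,m(m+1)/2$ and coefficients $-(-1)^m(6m+3)$, so Jacobi's identity in the variable $q^9$ finishes it.

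However, there is a genuine gap at the other piece, precisely where you dismiss the difficulty as ``purely computational.'' Combining $n=3m$ and $n=3m+2$ (the latter re-indexed by $m\mapsto -m-1$) yields the bilateral series $P(q^3)$ with $P(q)=\sum_{m=-\infty}^{\infty}(-1)^m(6m+1)q^{m(3m+1)/2}$, and the assertion $P(q)=(q;q)_\infty a(q)$ is \emph{not} obtainable by bookkeeping or re-indexing: no rearrangement of this theta-type series produces the Lambert series defining $a(q)$ term by term. It is a classical but substantive identity (a consequence of the quintuple product identity, or of Ramanujan's ${}_1\psi_1$ summation), and the paper does not prove it either --- it closes exactly this step by citing \cite{Pre}. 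Your argument, as written, stalls at this point unless you likewise cite or prove that result. A secondary error: $a(q)$ admits no infinite-product representation (unlike the Borwein functions $b(q)=(q;q)_\infty^3/(q^3;q^3)_\infty$ and $c(q)=3(q^3;q^3)_\infty^3/(q;q)_\infty$), so your opening plan of writing $a(q^3)$ ``purely as a product,'' and the proposed ``eta-quotient identity'' alternative, cannot work as stated; the representation $a(q)=\varphi(q)\varphi(q^3)+4q\psi(q^2)\psi(q^6)$ is a sum of two products, not a product. A coefficient-matching argument in the space of weight-one forms on $\Gamma_1(3)$ could in principle be salvaged, but that is a different (and heavier) tool than anything the paper uses.
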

\begin{proof}
By Jacobi's identity \cite[Theorem 1.3.9]{Berndt}, we have
\[(q;q)_{\infty }^{3}=\sum\limits_{n=0}^{\infty }{{{(-1)}^{n}}(2n+1){{q}^{n(n+1)/2}}}.\]
Note that $\frac{n(n+1)}{2}\equiv 0$ (mod 3) if and only if $n\equiv 0$ (mod 3) or $n\equiv 2$ (mod 3).
And $\frac{n(n+1)}{2}\equiv 1$ (mod 3) if and only if  $n\equiv 1$ (mod 3).
Hence we have the following 3-dissection identity
\[(q;q)_{\infty }^{3}=P({{q}^{3}})+qR({{q}^{3}}).\]
We have
\begin{displaymath}
\begin{split}
   P({{q}^{3}}) &=\sum\limits_{m=0}^{\infty }{{{(-1)}^{3m}}(6m+1){{q}^{3m(3m+1)/2}}}+\sum\limits_{m=0}^{\infty }{{{(-1)}^{3m+2}}(6m+5){{q}^{(3m+2)(3m+3)/2}}} \\
 & =\sum\limits_{m=0}^{\infty }{{{(-1)}^{m}}(6m+1){{q}^{3m(3m+1)/2}}}+\sum\limits_{m=-\infty }^{-1}{{{(-1)}^{m}}(6m+1){{q}^{3m(3m+1)/2}}} \\
 & =\sum\limits_{m=-\infty }^{\infty }{{{(-1)}^{m}}(6m+1){{q}^{3m(3m+1)/2}}}. \\
\end{split}
\end{displaymath}
Replacing ${{q}^{3}}$ by $q$, we obtain
\[P(q)=\sum\limits_{m=-\infty }^{\infty }{{{(-1)}^{m}}(6m+1){{q}^{m(3m+1)/2}}}.\]
From \cite{Pre} we know that
\[P(q)={{(q;q)}_{\infty }}\Bigg(1+6\sum\limits_{n\ge 0}{\Big(\frac{{{q}^{3n+1}}}{1-{{q}^{3n+1}}}-\frac{{{q}^{3n+2}}}{1-{{q}^{3n+2}}}\Big)}\Bigg)=(q;q)_{\infty}a(q).\]

Again, we have
\[qR({{q}^{3}})=\sum\limits_{m=0}^{\infty }{{{(-1)}^{3m+1}}(6m+3){{q}^{(3m+1)(3m+2)/2}}}.\]
Dividing both sides by $q$ and replacing ${{q}^{3}}$ by $q$, we deduce that
\[R(q)=-3\sum\limits_{m=0}^{\infty }{{{(-1)}^{m}}(2m+1){{q}^{3m(m+1)/2}}}=-3({{q}^{3}};{{q}^{3}})_{\infty }^{3}.\]
\end{proof}

\section{Proof of Theorem \ref{thm}}
From \cite{Hirschhorn} we find
\begin{equation}\label{3n1}
\begin{split}
&\quad \sum\limits_{n\ge 0}{c\phi_{6}(3n+1)q^n}\\
&=9\Bigg(\frac{(q^2;q^2)_{\infty}^5(q^3;q^3)_{\infty}^{6}}{(q;q)_{\infty}^{22}(q^4;q^4)_{\infty}^2}\bigg(2a(q)^5\frac{(q^3;q^3)_{\infty}^3}{(q;q)_{\infty}}+189qa(q)^2\frac{(q^3;q^3)_{\infty}^{12}}{(q;q)_{\infty}^4}\bigg)\\
&\quad +\frac{(q^3;q^3)_{\infty}^9(q^4;q^4)_{\infty}(q^6;q^6)_{\infty}^2}{(q;q)_{\infty}^{23}(q^2;q^2)_{\infty}(q^{12};q^{12})_{\infty}} \\
&\quad \times \bigg(2a(q)^6+378qa(q)^3\frac{(q^9;q^9)_{\infty}^{9}}{(q;q)_{\infty}^3}+1458q^2\frac{(q^3;q^3)_{\infty}^{18}}{(q;q)_{\infty}^6}\bigg)\\
&\quad -\frac{(q^3;q^3)_{\infty}^9(q^{12};q^{12})_{\infty}^2}{(q;q)_{\infty}^{23}(q^6;q^6)_{\infty}}\Big(36qa(q)^5\frac{(q^3;q^3)_{\infty}^3}{(q;q)_{\infty}}+1944q^2a(q)^2\frac{(q^3;q^3)_{\infty}^{12}}{(q;q)_{\infty}^4}\bigg)\Bigg).
\end{split}
\end{equation}
By Lemma \ref{basic} we have
\[(q;q)_{\infty}^3 \equiv (q^3;q^3)_{\infty} \pmod{3}, \quad (q;q)_{\infty}^{27}\equiv (q^3;q^3)_{\infty}^9 \pmod{27}. \]
Hence
\begin{equation}\label{start}
\begin{split}
&\quad \sum\limits_{n=0}^{\infty}{c\phi_{6}(3n+1)q^n}\\
\quad &\equiv 18\Bigg(a(q)^5\frac{(q^2;q^2)_{\infty}^5(q^3;q^3)_{\infty}^9}{(q;q)_{\infty}^{23}(q^4;q^4)_{\infty}^{2}} +a(q)^6\frac{(q^3;q^3)_{\infty}^9(q^4;q^4)_{\infty}(q^6;q^6)_{\infty}^2}{(q;q)_{\infty}^{23}(q^2;q^2)_{\infty}(q^{12};q^{12})_{\infty}}\\
&\quad -18qa(q)^5\frac{(q^3;q^3)_{\infty}^{12}(q^{12};q^{12})_{\infty}^2}{(q;q)_{\infty}^{24}(q^6;q^6)_{\infty}}\Bigg) \\
& \equiv 18\Bigg(a(q)^5\frac{(q^2;q^2)_{\infty}^5(q;q)_{\infty}^4}{(q^4;q^4)_{\infty}^2} +a(q)^6 \frac{(q;q)_{\infty}^4(q^4;q^4)_{\infty}(q^6;q^6)_{\infty}^2}{(q^2;q^2)_{\infty}(q^{12};q^{12})_{\infty}} \\
&\quad -18qa(q)^5\frac{(q^3;q^3)_{\infty}^{12}(q^{12};q^{12})_{\infty}^2}{(q^3;q^3)_{\infty}^8(q^6;q^6)_{\infty}}\Bigg) \pmod{243}. \\
\end{split}
\end{equation}

Note that
\begin{equation}\label{term1}
\frac{(q^2;q^2)_{\infty}^5(q;q)_{\infty}^4}{(q^4;q^4)_{\infty}^2}=\frac{(q^2;q^2)_{\infty}^5}{(q;q)_{\infty}^2(q^4;q^4)_{\infty}^2}\cdot (q;q)_{\infty}^6=\varphi(q)(q;q)_{\infty}^6,
\end{equation}
\begin{equation}\label{term2}
\frac{(q;q)_{\infty}^4(q^4;q^4)_{\infty}}{(q^2;q^2)_{\infty}}=\frac{(q;q)_{\infty}(q^4;q^4)_{\infty}}{(q^2;q^2)_{\infty}}\cdot (q;q)_{\infty}^3=\psi(-q)(q;q)_{\infty}^3.
\end{equation}

By (\ref{3psi}), (\ref{aq5}), (\ref{term1}) and Lemma \ref{3dis}, we have
\begin{displaymath}
\begin{split}
&\quad a(q)^5\frac{(q^2;q^2)_{\infty}^5(q;q)_{\infty}^4}{(q^4;q^4)_{\infty}^2} \\
&\equiv \Big(a(q^3)^5+3qa(q^3)^4\frac{(q^9;q^9)_{\infty}^3}{(q^3;q^3)_{\infty}} +9q^2a(q^3)^3\frac{(q^9;q^9)_{\infty}^6}{(q^3;q^3)_{\infty}^2}\Big) \Big(\varphi(q^9)+2qX(q^3)\Big)\\
&\quad \Big(a(q^3)^2(q^3;q^3)_{\infty}^2-6qa(q^3)(q^3;q^3)_{\infty}(q^9;q^9)_{\infty}^3+9q^2(q^9;q^9)_{\infty}^6\Big) \pmod{27},
\end{split}
\end{displaymath}
Extracting the terms of the form $q^{3n+2}$ in both sides, we obtain after simplification that
\begin{equation}\label{I}
q^2I(q^3) \equiv -6q^2a(q^3)^6X(q^3)(q^3;q^3)_{\infty}(q^9;q^9)_{\infty}^3 \pmod{27}.
\end{equation}
Thus
\[I(q)\equiv -6a(q)^6X(q)(q;q)_{\infty}(q^3;q^3)_{\infty}^3 \pmod{27}.\]

Similarly, by (\ref{3psi}), (\ref{aq6}), (\ref{term2}) and Lemma \ref{3dis} we have
\begin{displaymath}
\begin{split}
&\quad a(q)^6 \frac{(q;q)_{\infty}^4(q^4;q^4)_{\infty}(q^6;q^6)_{\infty}^2}{(q^2;q^2)_{\infty}(q^{12};q^{12})_{\infty}} \\
&\equiv \frac{(q^6;q^6)_{\infty}^2}{(q^{12};q^{12})_{\infty}} \Big(a(q^3)^6 +9qa(q^3)^5\frac{(q^9;q^9)_{\infty}^3}{(q^3;q^3)_{\infty}}\Big)\\
&\quad \Big(Y(-q^3)-q\psi(-q^9)\Big)\Big(a(q^3)(q^3;q^3)_{\infty}-3q(q^9;q^9)_{\infty}^3\Big) \pmod{27}.
\end{split}
\end{displaymath}
Extracting the terms of the form $q^{3n+2}$ in both sides, we obtain
\begin{equation}\label{J}
q^2J(q^3)\equiv -6q^2\frac{(q^6;q^6)_{\infty}^2}{(q^{12};q^{12})_{\infty}}a(q^3)^6\psi(-q^9)(q^9;q^9)_{\infty}^3 \pmod{27}.
\end{equation}
Thus
\[J(q) \equiv -6\frac{(q^2;q^2)_{\infty}^2}{(q^{4};q^{4})_{\infty}}a(q)^6\psi(-q^3)(q^3;q^3)_{\infty}^3 \pmod{27}.\]
From the product representations of $X(q)$ and $\psi(q)$, it is easy to see that $I(q)\equiv J(q)$ (mod 27).

In the same way, applying (\ref{aq5}) and extracting the terms of the form $q^{3n+2}$ in
\begin{displaymath}
-18qa(q)^5\frac{(q^3;q^3)_{\infty}^{12}(q^{12};q^{12})_{\infty}^2}{(q^3;q^3)_{\infty}^8(q^6;q^6)_{\infty}} \equiv -18qa(q^3)^5\frac{(q^3;q^3)_{\infty}^{4}(q^{12};q^{12})_{\infty}^2}{(q^6;q^6)_{\infty}} \pmod{27},
\end{displaymath}
we get
\[q^2K(q^3) \equiv 0 \pmod{27}.\]
Thus
\[K(q)\equiv 0 \pmod{27}.\]

If we extract the terms of the form $q^{3n+2}$ in (\ref{start}), divide by $q^2$ and replace $q^3$ by $q$, we deduce that
\begin{equation}\label{9n7}
\sum\limits_{n\ge 0}{c\phi_{6}(9n+7)q^n}\equiv 18\big(I(q)+J(q)+K(q)\big) \equiv 36J(q) \pmod{243}.
\end{equation}
Note that
\begin{equation}\label{Jdisc}
\begin{split}
J(q) &\equiv -6\varphi(-q^2)a(q^3)^6\psi(-q^3)(q^3;q^3)_{\infty}^3 \\
&\equiv -6\Big(\varphi(-q^{18})-2q^2X(-q^6)\Big)a(q^3)^6\psi(-q^3)(q^3;q^3)_{\infty}^3 \pmod{27}.
\end{split}
\end{equation}
Thus the terms of the form $q^{3n+1}$ in $J(q)$ vanish modulo 27. Therefore, by (\ref{9n7}) we see that
\[c\phi_{6}(27n+16) \equiv 0 \pmod{243}.\]

Furthermore, by (\ref{Jdisc}), extracting the terms of the form $q^{3n}$ in (\ref{9n7}), then replacing $q^3$ by $q$,  we obtain
\begin{displaymath}
\begin{split}
&\sum\limits_{n\ge0}{c\phi_{6}(27n+7)q^n} \\
&\equiv -216 \varphi(-q^6)a(q)^6\psi(-q)(q;q)_{\infty}^3 \\
& \equiv 27\varphi(-q^6)a(q^3)^6\Big(Y(-q^3)-q\psi(-q^9)\Big)\Big(a(q^3)(q^3;q^3)_{\infty}-3q(q^9;q^9)_{\infty}^3\Big) \pmod{243}.
\end{split}
\end{displaymath}
Extracting the terms of the form $q^{3n+2}$, dividing by $q^2$ and replacing $q^3$ by $q$, we get
\begin{equation}\label{mod81result}
\begin{split}
&\sum\limits_{n \ge 0}{c\phi_{6}(81n+61)q^n} \\
&\equiv 81 \varphi(-q^2)a(q)^6\psi(-q^3)(q^3;q^3)_{\infty}^3 \\
&\equiv 81 \Big(\varphi(-q^{18})-2q^2X(-q^6)\Big)a(q^3)^6\psi(-q^3)(q^3;q^3)_{\infty}^3 \pmod{243}.
\end{split}
\end{equation}
This implies (\ref{mod81}). Note that the terms of the form $q^{3n+1}$ do not appear in the right hand side of (\ref{mod81result}), we deduce that
\[c\phi_{6}(243n+142) \equiv 0 \pmod{243}.\]

From (\ref{9n7})--(\ref{mod81result}) we deduce that
\[c\phi_{6}(81n+61) \equiv 3 c\phi_{6}(9n+7) \pmod{243}.\]
Replacing $n$ by $9n+6$ in this congruence relation and applying (\ref{mod81}), we obtain
\[c\phi_{6}(729n+ 547) \equiv 0 \pmod{243}.\]

\end{document}